\documentclass[a4paper,12pt]{article}

\usepackage[utf8]{inputenc}
\usepackage{amsfonts}
\usepackage{amsmath,latexsym,amssymb}
\usepackage{amsthm} %inma para entorno remark
\usepackage{rotating}
\usepackage{multirow,longtable}
\usepackage{tabularx}
\usepackage{colortbl}
\usepackage{dcolumn}
\usepackage{booktabs}
\usepackage{graphicx}
\usepackage{float}
\usepackage{boldline,multirow}
\usepackage{subfigure}
\usepackage{tikz}
\usepackage{lscape}
\usepackage{verbatim}
\usepackage[backend=biber,style=apa]{biblatex}
\bibliography{references}
\usepackage{sectsty}
\allsectionsfont{\normalfont\mdseries}
\usepackage{comment}

\newtheoremstyle{mystyle}%                % Name
{}%                                     % Space above
{}%                                     % Space below
{\itshape}%                             % Body font
{}%                                     % Indent amount
{\bfseries}%                            % Theorem head font
{.}%                                    % Punctuation after theorem head
{ }%                                    % Space after theorem head, ' ', or \newline
{}%
\theoremstyle{mystyle}

\newtheorem{propo}{Proposition}[section]
\newtheorem{lemma}{Lemma}[section]

\usepackage[linesnumbered,ruled,vlined]{algorithm2e}
\SetKwRepeat{Repeat}{repeat at each node of the branching tree}{until}

\usepackage[normalem]{ulem}

\newcommand{\msout}[1]{\text{\sout{\ensuremath{#1}}}}

\definecolor{armygreen2}{rgb}{0.99, 0.53, 0.93}

\definecolor{armygreen3}{rgb}{0.53, 0.53, 0.99}

\definecolor{armygreen4}{rgb}{0.023, 0.93, 0.73}

\newcommand{\AMR}[2]{{\color{armygreen2}#1 \ifmmode\msout{#2}\else\sout{#2}\fi}}

\newcommand{\IEMM}[2]{{\color{blue}#1
\ifmmode\msout{#2}\else\sout{#2}\fi}}

\begin{document}

\title{Hub location problems with asymmetric allocation}

%\setcitestyle{authoryear,open={(},close={)}}

 \author{Inmaculada Espejo$^a$, Mercedes Landete$^b$, Marina Leal$^b$ and Alfredo Mar{\'i}n$^c$}
	
 \date{\small
 $^a$ Departamento de Estad\'istica e Investigaci\' on Operativa, Universidad de C\'adiz, Spain.\\
 $^b$ Centro de Investigaci\' on Operativa, Universidad Miguel Hern\'andez, Spain.\\
 $^c$ Departamento de Estad\'istica e Investigaci\' on Operativa, Universidad de Murcia, Spain.\\
 }

\maketitle \thispagestyle{empty}

\begin{abstract}
Hub location problems are central to optimizing logistics, telecommunications, and transportation networks
by consolidating flows through strategically placed hubs. While existing models assume symmetric allocation
--where hubs handle incoming and outgoing flows uniformly-- real-world applications often require asymmetric
handling of origins and destinations. This asymmetry is better suited to model the complex logistical trade-off between cost efficiency at the origin and service resilience at the destination. This paper introduces the Asymmetric Hub Location Problem ($(r,s)$-AHLP),
a novel framework where origins and destinations may connect to hubs under distinct allocation limits
($r$ and $s$, respectively). This framework generalizes and includes the existing allocation structures in the literature. We then focus on the $(1,p)$-AHLP variant, where origins are single-assigned and
destinations are multi-assigned, motivated by applications in humanitarian logistics and global supply chains
(e.g., UN relief networks, e-commerce fulfillment).

We propose two integer programming formulations for the $(r,s)$-AHLP: A four-index adaptation of classical models and a new three-index formulation. 
Given the applications of the $(1,p)$-AHLP variant, we focus on develop a solution procedure based on a new compact formulation, which serves as the base for a branch-and-cut algorithm. The solution methodology is strengthened by novel valid inequalities derived using Farkas' Lemma. We propose a separation algorithm for the valid inequalities that results in the decomposition of the associated optimization problem into the dual of classical transportation problems, ensuring fast and efficient cut generation within the branch-and-cut framework.
The computational study, performed on standard datasets commonly used in hub location literature, demonstrates the high effectiveness and efficiency of the proposed solution methodology. By explicitly addressing asymmetric flow, our work significantly advances hub location theory and provides a starting point for complex multi-level network design studies.
\end{abstract}

\textbf{Keywords:} Hub location, asymmetric allocation, integer programming, humanitarian logistics, supply chain optimization.

\section{Introduction}

Hub Location Problems (HLP) arise in various applications, such as telecommunications and transportation systems, where multiple origin-destination pairs exchange goods or data. Instead of connecting each pair directly, transshipment points (hubs) consolidate flows from origins and distribute them to destinations. This approach exploits economies of scale by routing products through centralized hubs.
Several reviews, including \textcite{Alumur2008}, \textcite{Campbell2012}, \textcite{Farahani2013}, \textcite{Contreras2019} and \textcite{Alumur2021},
highlight the breadth of research and applications in this field.

In general, hub location problems involve deciding where to locate hub facilities and how to assign origin–destination pairs to them, ensuring that routes pass only through established hubs. Different allocation strategies define problem variants: (i) single allocation,  where each site is assigned to exactly one hub;
(ii) multiple allocation,  where sites may connect to several hubs; and (iii) $r$-allocation,  an intermediate strategy in which each site connects to at most $r$ hubs.

Hub location problems can also be classified according to the number of hubs to be opened:
(i) $p$-hub median problems,  where a maximum number $p$ of hubs can be located; and (ii) fixed-cost hub location problems, where hubs can be opened at a given cost without a predetermined limit on their number.

Additionally, problems may be capacitated (hubs have limited capacity) or uncapacitated, and several objectives can be considered, typically of the median or center types.

However, the vast majority of existing hub location models rely on the critical assumption of symmetric allocation, meaning that the rules and limits for connecting origins to hubs are the same as those for connecting destinations to hubs. This symmetry fails to capture the complexity of modern logistics networks where consolidation and distribution phases operate under fundamentally different mandates. For instance, in global supply chains and e-commerce fulfillment, the primary goal for flow consolidation at the origin is often efficiency and cost minimization (typically favoring single assignment), whereas the goal for final delivery at the destination is speed, service quality, and resilience against disruption (necessitating multiple allocation pathways).

%This paper focuses on hub location problems with a novel allocation strategy related to both single and multiple allocation models. We propose what we call the asymmetric hub location problem, in which origins can send their products to a subset of hubs, and destinations can receive them from another (potentially different) subset. 

To address this theoretical and practical gap, this paper introduces the {\it Asymmetric Hub Location Problem} (AHLP), a novel framework where origins and destinations may connect to hubs under distinct allocation limits,  generalizing and encompassing all existing allocation paradigms in the literature as special cases.

We strategically focus our methodological developments on the specific case where origins are constrained to single assignment, and destinations are allowed multiple assignment. This is the most logistically relevant scenario for simultaneously optimizing cost efficiency and network resilience, as evidenced by applications detailed in Section \ref{applications} in global e-commerce fulfillment (e.g., Amazon, FedEx) and humanitarian relief efforts (e.g., UNHRD, IFRC).

The specific structure of this particular version makes it non-trivial to solve using existing methods. We will show with examples throughout computational experiments that this variant produces structurally different optimal network designs compared to symmetric models existing in the literature.

This allocation structure, while highly common in modern distribution networks, is virtually new to hub location theory. To the best of our knowledge, the only related work is found within the highly specialized context of the ordered hub location problem \parencite{orderedhub2011, orderedhub2013, orderedhub2016}. In these studies, a flow structure of single origin hub to multiple destination hubs was utilized, but this structure emerged as a consequence of enforcing the flow sequence required by that specific problem, not as a general, dedicated allocation strategy. We introduce a truly asymmetric approach where allocation limits are dictated independently by the distinct logistical roles and requirements of the origin and destination, respectively. We propose new mathematical models and present computational results showing the impact of asymmetry on optimal allocation.

%As we will see in Section \ref{applications}, there is a wealth of applications where the assumption of symmetry does not fit the real situation. In other words, symmetric models can be seen as unnecesarily limiting. The aim of this article is to fill the gap in the literature.

\subsection{Literature review on allocation strategies and modeling approaches}

To contextualize our proposed models, we briefly review the relevant literature on integer programming formulations for the uncapacitated median hub location problem and related assignment problems.  While our models align with the uncapacitated $p$-hub median framework, they can be adapted to fixed-cost or capacitated settings with minor modifications.

\bigskip

\noindent \textbf{Multiple allocation}

\bigskip
Multiple allocation strategies, where origin-destination pairs may use different hub combinations, are generally easier to formulate and computationally less demanding than their single-allocation counterparts.

Early integer programming formulations for $p$-hub and fixed-cost problems used four-indexed variables \parencite{Campbell1992, Campbell1994}. These initial models provided weak lower bounds, but improvements followed in \textcite{OKelly1996} and \textcite{SkorinKapov1996}. Later, \textcite{Marin2006} and \textcite{Hamacher2004} refined the four-index formulations, while \textcite{Marin2005} and \textcite{Canovas2007} advanced solution methods. Large-scale instances were later tackled via Benders decomposition \parencite{Contreras2011}, and more recently, \textcite{LANDETE2024EJOR} introduced a large new family of valid inequalities for branch-and-cut procedures.

Alternative formulations with three-indexed variables were introduced in \textcite{EK1998b, EK1998a}. While four-index formulations yield strong bounds, they require significant computational resources. Three-index models are more compact but provide weaker bounds. To balance these trade-offs, \textcite{Garcia2012} developed a two-index formulation enhanced with valid inequalities, enabling the solution of large instances.

\bigskip

\noindent \textbf{Single allocation}

\bigskip

Single allocation problems, where each site connects to only one hub, are known to be computationally more complex. Even with fixed hub locations, the allocation subproblem remains NP-hard \parencite{Kara1999}.

The first formulation, proposed in \textcite{OKelly1987}, used quadratic terms. Later, \textcite{Campbell1994} introduced a linear integer programming formulation. \textcite{SkorinKapov1996} then developed a path-based mixed-integer formulation with fewer variables, widely adopted in decomposition methods.

To reduce the number of variables, projection techniques were applied to the path-based formulation \parencite{Labbe2004, Labbe2005, Camargo2011, Camargo2012}. \textcite{EK1996} proposed a flow-based formulation, which further reduced model size while maintaining effectiveness. This approach was extended to capacitated and balanced problems \parencite{Correia2011}, as well as to budget-constrained network upgrade problems \parencite{LANDETE2024TRB}.

A new compact formulation proposed in \textcite{Ebery2001} required far fewer variables than previous models, with a similar number of constraints. However, the model in \textcite{SkorinKapov1996} provided tighter linear relaxations, and the formulation in  \textcite{EK1996} was computationally faster. Recent work includes linearization techniques for quadratic terms \parencite{Meier2016, GKara2019, Rostami2019}, Benders decomposition for large-scale instances \parencite{GKara2019}, and branch-and-cut algorithms \parencite{Rostami2019}.

A recent compact formulation with fewer variables was proposed in \textcite{sahlp}, along with valid inequalities and a branch-and-cut algorithm. This method efficiently solved large-scale instances in competitive times.

\bigskip

\noindent \textbf{$r$-allocation}

\bigskip

The $r$-allocation strategy generalizes both single and multiple allocation models, allowing each site to be connected to at most $r$ hubs. When $r=1$, it reduces to the single-allocation case, while for large values of $r$, it approaches multiple allocation.

\textcite{Yaman2011} developed mixed-integer formulations and found that single allocation solutions lead to higher  costs compared to multiple allocation, but significant savings arise when $r=2$ or $3$. \textcite{Corberan2019} further strengthened the model by introducing valid inequalities, optimality cuts, and a branch-and-cut algorithm, supported by polyhedral analysis and separation procedures.

\bigskip

\noindent\textbf{Quadratic assignment and semi-assignment}

\bigskip

Given two matrices $A$ and $C$, the Quadratic Assignment Problem (QAP) seeks a simultaneous row and column permutation of $C$ that minimizes the discrepancy between $A$ and the permuted version of $C$. This problem is among the most challenging in combinatorial optimization \parencite{5,6,32,34,38}, with applications in
graph matching \parencite{3,13}, de-anonymization and privacy \parencite{43}, protein network alignment \parencite{47}
and traveling salesman problems \parencite{16,29}, to cite a few.

A common approach to the QAP is convex relaxation \parencite{51}. Notably, \textcite{27} introduced a linear programming relaxation and derived lower bounds, which were later refined by \textcite{34}. Further algorithmic advances are surveyed in \textcite{5,6,34,38}.

Closer to the problems we introduce in this work is the Quadratic Semi-Assignment Problem (QSAP, \textcite{saito2009}),
where assignments are one-directional (unlike QAP’s two-way permutations). Polynomially solvable cases and lower bounds were studied
in \textcite{a21,a22,a23}, with applications in scheduling (e.g., unrelated parallel machines \textcite{a31}),
simplified facility location \parencite{a28} and metric labeling \parencite{a16}.

The inclusion of these two Assignment Problems (AP) helps establish the inherent difficulty of the allocation decision in problems involving flow and network structure. Specifically, the structural complexity of QSAP in modeling unidirectional assignments is conceptually related to the allocation difficulties introduced by the asymmetric constraints in the AHLP, which highlights the necessity of a computationally efficient and effective modeling approach.

\subsection{Contributions of the paper}

This paper’s main contribution is the development of a novel and comprehensive framework for hub network design, defined by the general Asymmetric Hub Location Problem (AHLP). This framework explicitly models the strategic need for distinct allocation rules at origins and destinations. It generalizes and includes all previously studied allocation structures (single, multiple, and $r$-allocation) found in the literature. We address this practical and theoretical need through the following key contributions:
\begin{itemize}
\item We introduce and formulate the general Asymmetric Hub Location Problem that allows modeling the strategic need for distinct allocation criteria at the origin and the destination. We then explicitly develop a solution methodology for the particular case of single allocation at the origin and multiple allocation at the destination, a variant of high logistical relevance that captures the conflict between cost control and service resilience, and which is highly applicable in global supply chains and humanitarian logistics.
\item We propose two general Integer Programming formulations for the AHLP framework: (i) an adaptation of the classical four-index model; and (ii) a new three-index formulation. Both formulations are valid for a general cost structure, which significantly broadens the applicability of our work compared to many existing models in the literature that are restricted to the disaggregated cost structure. For the variant single allocation at the origin and multiple allocation at the destination,  we propose a compact formulation utilizing aggregated cost variables. 
\item We develop a solution procedure based on a branch-and-cut framework. We derive new valid inequalities using Farkas' Lemma applied to an auxiliary formulation, which significantly tightens the linear programming relaxation of our compact model. Furthermore, we demonstrate the computational efficiency of the cutting plane generation by proving that the associated separation problem decomposes into a set of independent subproblems, each of which is the dual of a classical transportation problem, allowing for their efficient solution via dedicated algorithms.
\item We perform a thorough computational study to evaluate the proposed methodology. Specifically, we quantify the operational impact and the cost trade-offs associated with imposing asymmetric allocation constraints, which offer clear guidance on optimizing network efficiency and resilience simultaneously.
\item We significantly advance hub location theory by explicitly addressing asymmetric flow, and provide a starting point for future complex multi-level network design studies.
\end{itemize}

The rest of the paper is organized as follows. 
Section 2 provides a detailed description of the asymmetric hub location problem. Section 3 discusses several relevant applications of this asymmetric problem in the specific case where single allocation is considered for origins and multiple allocations for destinations. Section 4 is the most technical one, where we propose several optimization models to solve the general asymmetric problem: the first model is quadratic, the second is linear and employs four-index variables, and the third is also linear but uses three-index variables. In Section 5, the general models are specialized to the particular case of single allocation for origins and multiple allocation for destinations, and two families of valid inequalities are proposed, one of them derived from Farkas’ Lemma. Section 6 presents the computational results obtained for several instances from the well-known AP library. From these computational experiments, we identify the most efficient branch-and-cut algorithm for our model and analyze the impact of asymmetric permissibility on the solution. 
Finally, Section 7 summarizes our conclusions.

\section{A new paradigm: the asymmetric hub location problem}

The classical paradigm of hub location problems involves a set of sites $N$ (cardinality $n$)
and a set of potential hubs $H$ (cardinality $h$), with a non-negative cost $C_{ijkm}$ for each quadruple
$(i,j,k,m)$ in $N\times N\times H\times H$. Given parameters $2\le p\le h-1$ (maximum number of hubs) and $1\le r\le p$
(allocation limit), the more general {\em $r$-Allocation Hub Location Problem} ($r$-AHLP) selects
\begin{itemize}
 \item a non-empty set $P\subset H$, $|P|\le p$, called set of hubs and
 \item for every $i\in N$, a set $P(i)\subseteq P$ with $1\le |P(i)|\le r$
\end{itemize}
to minimize the cost
\begin{equation}
 \sum_{i\in N} \sum_{j\in N} \min_{k\in P(i),m\in P(j)} C_{ijkm}.  \label{f.o1}
\end{equation}

Special cases include the Multiple Allocation Hub Location Problem (MAHLP, $r=p$, $N=H$) and the
Single Allocation Hub Location Problem (SAHLP, $r=1$, $N=H$).
A common framework assumes as input
\begin{itemize}
 \item $(w_{ij})$ a non-negative matrix representing the amount of product to be sent from $i\in N$ to $j\in N$,
 \item $(c_{ij})$ a non-negative matrix with zeros in the diagonal representing the cost of sending a unit
 of product directly from $i$ to $j$, and possibly satisfying the triangle inequality,
 \item a decomposed structure $C_{ijkm}=w_{ij}(\gamma c_{ik}+\alpha c_{km}+\beta c_{mj})$ with $0\le \alpha\le \beta ,\gamma$.
\end{itemize}
This is a particular case, here termed {\em Dissagregated Hub Location Problem}.

Fixed costs $\sum_{k\in P} f_k$ with $f_k\ge 0$ $\forall k\in H$ can be added to the objective function \eqref{f.o1},
typically setting $p=h$ to balance hub-opening and routing $C$-costs.

Existing models are symmetric, meaning hubs handle both incoming and outgoing connections identically. This assumption lacks
flexibility for real-world applications. 
We propose an asymmetric framework with distinct origin ($O$), destination ($D$), and hub ($H$) sets
(cardinalities $o$, $d$, $h$), independent allocation limits $r$ (outgoing) and $s$ (incoming), and a general cost
structure $C_{ijkm}$, $\forall (i,j,k,m)\in O\times D\times H\times H$.

The first, more general problem we deal with, named {\em $(r,s)$-Allocation Hub Location Problem} ($(r,s)$-AHLP) is to choose
\begin{itemize}
 \item a non-empty set $P\subset H$, $|P|\le p$, called set of hubs,
 \item for every $i\in O$, a non-empty set $P^o(i)\subseteq P$ with $1\le |P^o(i)|\le r$,
 \item and, for every $j\in D$, a non-empty set $P^d(j)\subseteq P$ with $1\le |P^d(j)|\le s$
\end{itemize}
to minimize
\begin{equation}
 \sum_{i\in O} \sum_{j\in D} \min_{k\in P^o(i),m\in P^d(j)} C_{ijkm}.  \label{f.o}
\end{equation}

From now on, superindices $o$ and $d$ are simply letters to distinguish ``origin'' and
``destination''. On the contrary, subindices will run over sets of integers.

With this notation, the MAHLP is the particular case $(p,p)$-AHLP when $O=D=H$. Regarding the SAHLP and the
$r$-AHLP, they can be considered two particular cases of $(r,s)$-AHLP with the additional constraints $P^o(i)=P^d(i)$ $\forall i\in O$.
They correspond, respectively, with the cases $(r,s)=(1,1)$ and $s=r$ when $O=D=H$.

The general framework established by the $(r,s)$-AHLP, along with the precise conditions necessary to recover existing models from the literature, is summarized in Table \ref{table:par}.
\begin{table}[htbp]
    \centering
    \small \renewcommand{\arraystretch}{1.75}
    \renewcommand{\multirowsetup}{\centering}
    \begin{tabular}{ccccc}
        \toprule \toprule
        \textbf{Model} & \textbf{Origin} & \textbf{Destination} & \textbf{Asumptions}& \textbf{Status}\\
        \midrule \midrule
        $(r,s)$-AHLP & $r \ge 1$ & $s \ge 1$ & None &New\\ \midrule
       $(1,p)$-AHLP & $r=1$ & $s=p$ & None& New\\ \midrule
       MAHLP& $r=p$ & $s=p$ & $O=D=H$ & Classical HLP \\ \midrule
      \multirow{2}{24mm}{$r$-AHLP}& \multirow{2}{24mm}{$r$} & \multirow{2}{24mm}{$r$} & $O=D=H$&\multirow{2}{24mm}{Classical HLP}\\&&& $P^o(i) = P^d(i)$, $\forall i$ \\ \midrule
     \multirow{2}{24mm}{SAHLP}&\multirow{2}{24mm}{$r=1$} &\multirow{2}{24mm}{$s=1$} & $O=D=H$&\multirow{2}{24mm}{Classical HLP}\\ &&&$P^o(i) = P^d(i)$, $\forall i$ \\ \midrule
    &&&$O=D=H$\\
    QSAP&$r=1$ &$s=1$& $p=h$& Classical AP\\&&&\mbox{allocation in O/D to same hub}&\\ 
        \bottomrule \bottomrule
    \end{tabular}\caption{Relationship between $(r,s)$-AHLP and classical  problems}
    \label{table:par}
\end{table}

Although we will build integer programming formulations for $(r,s)$-AHLP, we will pay special attention to $(1,p)$-AHLP, i.e., with single allocation of the origins and multiple allocation of the destinations (equivalent to the reverse problem $(p,1)$-AHLP). As explained in the next section, our motivation is to analyze this case of particular importance in the field of humanitarian logistic, as well as in
other scopes of activity. For this reason we particularize in the following to specifically define the
{\em $(1,p)$-Allocation Hub Location Problem}: Select
\begin{itemize}
 \item a non-empty set of hubs $P\subset H$, $|P|\le p$,
 \item for every $i\in O$ a hub $h^o_i\in P$,
 \item and, for every $j\in D$, a non-empty set $P^d(j)\subseteq P$ of any cardinality
\end{itemize}
to minimize
\begin{equation}
 \sum_{i\in O} \sum_{j\in D} \min_{m\in P^d(j)} C_{ijh^o_im}.  \label{f.o2}
\end{equation}

\section{Applications for the $(1,p)$-AHLP} \label{applications}

The $(1,p)$-AHLP model is motivated by the operational reality of modern, multi-echelon logistics, where the consolidation phase (upstream) and the distribution phase (downstream) follow distinct and often conflicting objectives. This framework is essential for systems that prioritize cost minimization and routing simplicity in the upstream phase while simultaneously demanding maximum resilience and service quality in the downstream phase. The structure offers two critical advantages: (i) Upstream efficiency, as the single allocation for the first hub simplifies routing and minimizes initial consolidation costs; and (ii) Downstream resilience, as the multiple allocation for the second hub ensures timely, flexible, and reliable delivery pathways. 

These applications generally fall into two broad classes: Global supply chain networks and humanitarian supply chains.

\subsection{Global supply chain networks}

The $(1,p)$-AHLP structure is most clearly manifested in commercial logistics operations that prioritize cost control at the consolidation stage and speed or reliability at the final delivery stage.

\bigskip

\textbf{Commercial flow applications}

\bigskip

Multinational companies managing factory-to-customer flows utilize this scheme: factories/suppliers (origins) dispatch goods to a specific regional distribution center (first hub, single allocation) for cost-efficient consolidation, which then feeds centralized global distribution centers (second hub). Finally, retailers and customers (destinations) receive the goods, benefiting from multiple allocation to ensure reception from the most cost-effective distribution center. Similar ${(1,p)}$ structures are fundamental to air cargo companies and e-commerce fulfillment networks.

For example, \textit{Amazon’s} fulfillment network is a clear case where origins (sellers/warehouses) send inventory via single allocation to first hubs (regional fulfillment centers) for consolidation. These centers stock the second hubs (centralized mega-fulfillment centers). Customers’ delivery addresses (destinations) are multiply allocated to these mega-centers to guarantee faster service, often leveraging the dual distribution capabilities from multiple large hubs. Similarly, \textit{FedEx}, one of the world's largest courier delivery services, operates a hub-and-spoke network where packages move from origins (local pickup points) via single allocation to first hubs (regional sorting centers). These centers feed second hubs (centralized global superhubs, like Memphis), which utilize multiple allocation to ship to destinations (final delivery addresses). A package destined for London, for example, could be routed through Memphis, Indianapolis, or even Paris, depending on the most efficient path.

\bigskip

\textbf{Non-physical flow applications}

\bigskip

The model also extends to non-physical flows, such as those in telecommunications networks, where local data sources (origins) ensure efficient aggregation through single allocation to regional data centers (first hub), which feed centralized data hubs or cloud servers (second hub). End-users or applications (destinations) are multiply allocated for redundancy and improved service quality. This structure is also applicable to energy distribution networks, with power plants (origins) feeding regional substations (first hub) that connect to national distribution hubs (second hub) supplying consumers (destinations) via multiple paths.

\subsection{Humanitarian supply chains}

The ${(1,p)}$-AHLP structure is essential in humanitarian logistics, where the primary goal is ensuring operational resilience and comprehensive coverage in crisis zones. The destinations (affected populations) receive aid or services from multiple second hubs. This design is highly valuable because the multiple allocation ensures redundancy (allowing aid to continue if one hub fails), provides flexibility (permitting dynamic routing based on changing need), and guarantees comprehensive coverage (making sure assistance reaches even hard-to-reach areas). Meanwhile, the single-allocation structure at the first hub maintains efficient consolidation of bulk supplies at regional centers.

\bigskip

\textbf{Relief network examples}

\bigskip

This structure is explicitly seen in major international relief efforts. Organizations like \textit{The United Nations Humanitarian Response Depots} (UNHRD) and \textit{The International Federation of Red Cross} (IFRC) utilize a similar two-level hub model. In both cases, origins (donors/suppliers) send relief items via single allocation to first hubs (regional warehouses, e.g., Dubai, Accra, Kuala Lumpur) for efficient storage and consolidation. These hubs then distribute to second hubs (local distribution centers or partner organizations in affected areas) via multiple allocation. The affected populations (destinations) receive aid from multiple local points, maximizing the likelihood of timely assistance and ensuring widespread coverage, as demonstrated during the 2015 Nepal earthquake (UNHRD) and the 2020 Beirut explosion (IFRC). Similarly, \textit{Médecins Sans Frontières} (MSF) and \textit{Save the Children} employ this dual structure for medical supplies and relief aid, respectively, ensuring that the destinations (patients/families) are multiply allocated to the local second hubs (field hospitals or distribution centers) to improve treatment access and responsiveness, as observed during the Ebola outbreak or Cyclone Idai.

\section{Integer programming formulations for \hbox{$(r,s)$-AHLP}}

In this section, we present the mathematical formulations designed to model and solve the \hbox{$(r,s)$-AHLP}. We first define the non-linear model before deriving linear programming formulations.

\subsection{The quadratic formulation}

We start by defining the set of decision variables required for the quadratic formulation.
The following $y$-variables are used to select the hubs:
\begin{equation*}
y_k = \left\{\begin{aligned}
1 & \quad \text{if $k$ is selected to be in $P$}\\
0 & \quad \text{otherwise} \end{aligned}  \qquad \forall k\in H. \right.
\end{equation*}
The sets $P^o(i)$ and $P^d(i)$ are determined by means of the following variables:
\begin{equation*}
x^o_{ik}=\left\{\begin{aligned}
1 & \quad \text{if $k$ belongs to $P^o(i)$}\\
0 & \quad \text{otherwise}
\end{aligned}  \qquad \forall i\in O,k\in H, \right.
\end{equation*}
\begin{equation*}
x^d_{jm}=\left\{\begin{aligned}
1 & \quad \text{if $m$ belongs to $P^d(j)$}\\
0 & \quad \text{otherwise}
\end{aligned}  \qquad \forall j\in D,m\in H. \right.
\end{equation*}
Along with $y_k$, $x^o_{ik}$ and $x^d_{ik}$, we will use other two sets of binary variables. Given $i\in O$, $j\in D$,
\begin{equation*}
z^o_{ijk}=\left\{\begin{aligned}
 1 & \quad \text{for one $k\in P^o(i)$ such that $m$ exists with }\\
   & \quad \text{$(k,m)\in \arg\min_{\ell \in P^o(i),t \in P^d(j)} C_{ij\ell t}$}\\
 0 & \quad \text{otherwise}, \end{aligned}  \right.
\end{equation*}

\begin{equation*}
z^d_{ijm}=\left\{\begin{aligned}
 1 & \quad \text{for one $m\in P^d(j)$ such that $k$ exists with }\\
   & \quad \text{$(k,m)\in \arg\min_{\ell \in P^o(i),t \in P^d(j)} C_{ij\ell t}$}\\
 0 & \quad \text{otherwise.} \end{aligned}  \right.
\end{equation*}

Now we give the quadratic formulation:
\begin{subequations} \label{rsq}
\begin{align}
&&((r,s)\text{-FQ})\ \min \quad &
\displaystyle{\sum_{i\in O}\sum_{j\in D}\sum_{k\in H}\sum_{m\in H} C_{ijkm}z^o_{ijk}z^d_{ijm}}\\
&&\text{s.t. } & \displaystyle{\sum_{k\in H} y_k \le p} \label{y} \\
&&& x^o_{ik}\le y_k && \forall i\in O,k\in H \label{xo} \\
&&& x^d_{jm}\le y_m && \forall j\in D,m\in H \label{xd} \\
&&& \displaystyle{1\le \sum_{k\in H} x^o_{ik} \le r} && \forall i \in O \label{r} \\
&&& \displaystyle{1\le \sum_{m\in H} x^d_{jm} \le s} && \forall j \in D \label{s} \\
&&&\displaystyle{\sum_{k\in H} z^o_{ijk} = 1}&& \forall i\in O,j\in D \label{zo1} \\
&&&\displaystyle{\sum_{m\in H} z^d_{ijm} = 1}&& \forall i\in O,j\in D \label{zd1} \\
&&&\displaystyle{z^o_{ijk} \le x^o_{ik}} && \forall i\in O,j\in D,k\in H \label{zxo} \\
&&&\displaystyle{z^d_{ijm} \le x^d_{jm}} && \forall i\in O,j\in D,m\in H \label{zxd} \\
&&& y_k\in \{ 0,1\} && \forall k \in H \label{y01} \\
&&& x^o_{ik}\in \{ 0,1\} && \forall i\in O,k \in H \label{xo01} \\
&&& x^d_{ik}\in \{ 0,1\} && \forall i\in D,k \in H \label{xd01} \\
&&& z^o_{ijk},z^d_{ijk}\in \{ 0,1\} && \forall i\in O,j \in D,k \in H. \label{z01}
\end{align}
\end{subequations}

The quadratic term is in the objective function. The number of hubs to be located is
set by \eqref{y}. Constraints  \eqref{xo} ensure that an origin $i$  is not assigned to a site $k$ unless $k$ is in the set of hubs $P$. The same is ensured with constraints  \eqref{xd} for the destinations.
Constraints \eqref{r} and \eqref{s}  guarantee that each origin and destination are assigned to at least one and at most $r$ (for the origin) and $s$ (for the destination) elements of $H$, respectively. Constraints \eqref{zo1} say that in every pair origin-destination $(i,j)$
there is a hub to which $i$ is assigned. The same is said for the destination $j$ in constraints \eqref{zd1}. 
Constraints \eqref{zxo} and \eqref{zxd} establish the relationship between $x$- and $z$-variables for origins and destinations.
The binary condition of variables is imposed from
\eqref{y01}-\eqref{z01}.

Note that the QSAP can be obtained from the $(r,s)$-AHLP by
(i) taking $O=D=H$,
(ii) taking $r=s=1$ (single allocation in both sides, then implying $z^o_{ijk}=x^o_{ik}$  and
$z^d_{ijm}=x^d_{jm}$),
(iii) taking $p=h$ (all elements in $H$ are hubs) and 
(iv) adding constraints $x^o_{ik}=x^d_{ik}$ $\forall i\in O$, $k\in H$ (allocation in both sides to the same hub).
In such a way, each site (origin and destination at the same time) will be assigned to exactly one site and each pair
of assignments will carry a given $C$-cost. On the other hand, the QAP requires also each hub to be assigned to only
one site, what can be achieved with the addition of constraints $\sum_{i\in O} x^o_{ik}=1$ $\forall k\in H$.

\subsection{Linear formulation based on variables with four indices}

The first linearization of formulation ($(r,s)$-FQ)  is an adaptation of the formulation for the $r$-AHLP originally proposed in \textcite{Yaman2011}. This model relies heavily on four-index variables ($X_{ijkm}$), which explicitly capture the entire path of the flow from origin $i$ to destination $j$ via hubs $k$ and $m$. This results in a formulation with a strong linear relaxation but one whose size becomes practically prohibitive for large-scale instances due to the multiplicative growth of the $X$-variables.

Given $i\in O$ and $j\in D$,
\begin{equation*}
X_{ijkm}=\left\{\begin{aligned}
 1 & \quad \text{for one $k\in P^o(i)$ and one $m\in P^d(j)$ }\\
   & \quad \text{such that $(k,m)\in \arg\min_{\ell \in P^o(i),t \in P^d(j)} C_{ij\ell t}$}\\
 0 & \quad \text{otherwise}. \end{aligned}  \right.
\end{equation*}

The formulation is
\begin{subequations}
\begin{align}
&&((r,s)\text{-F4)\ }\min \quad &
\displaystyle{\sum_{i\in O}\sum_{j\in D}\sum_{k\in H}\sum_{m\in H} C_{ijkm}X_{ijkm}}\\
&&\text{s.t. } &  \eqref{y},\eqref{xo},\eqref{xd},\eqref{r},\eqref{s},
                  \eqref{y01},\eqref{xo01},\eqref{xd01}\nonumber \\
&&& \displaystyle{\sum_{k\in H}\sum_{m\in H} X_{ijkm} = 1}&& \forall i\in O,j\in D \\
&&& \displaystyle{\sum_{k\in H} X_{ijkm} \le x^d_{jm}} && \forall i\in O,j\in D,m\in H \\
&&& \displaystyle{\sum_{m\in H} X_{ijkm} \le x^o_{ik}} && \forall i\in O,j\in D,k\in H\\
&&& X_{ijkm}\in \{ 0,1\} && \forall i\in O, j\in D, k,m \in H. \label{Xbin} 
\end{align}
\end{subequations}

Note that the number of variables is $O(o\cdot d\cdot h^2)$ and the number of constraints
is $O(o\cdot d\cdot h)$. 
%The following subsection shows that the linearization can be obtained with fewer variables and constraints.

\subsection{A new three-index formulation}

We develop a new formulation to linearize ($(r,s)$-FQ). This approach significantly reduces model size and is designed to overcome the computational limitations of the four-index model. This formulation reuses the flow variables ($z_{ijk}$) already defined in the quadratic model and introduces new auxiliary variables to linearize the objective function.

We introduce the new set of continuous variables, $\mu_{ij}$, that replace
in ($(r,s)$-F4) the value of $\sum_{k\in H}\sum_{m\in H} C_{ijkm}X_{ijkm}$. The formulation is then
\begin{subequations}
\begin{align}
&&((r,s)\text{-F3)\ }\min \quad &
\displaystyle{\sum_{i\in O} \sum_{j\in D} \mu_{ij}} \\
&&\text{s.t. } & \eqref{y},\eqref{xo},\eqref{xd},\eqref{r},\eqref{s},
                 \eqref{zo1},\eqref{zd1},\eqref{zxo},\eqref{zxd},
                 \eqref{y01},\eqref{xo01},\eqref{xd01},\eqref{z01}  \nonumber \\
&&& \displaystyle{\mu_{ij}\ge \sum_{m\in H} C_{ijkm} z^d_{ijm} -
    \sum_{\stackrel{\ell\in H}{\ell\neq k}} \max_{m\in H} (C_{ijkm}-C_{ij\ell m}) z^o_{ij\ell}}
&& \forall i\in O,j\in D,k\in H. \label{tela}
\end{align}
\end{subequations}

The meaning of constraints \eqref{tela} is the following. Consider any fixed values of
$i\in O$, $j\in D$ and $k\in H$. From \eqref{zo1} either $z^o_{ijk}=1$ or exactly
one $z^o_{ij\ell}$, $\ell\neq k$, will take value 1. If $z^o_{ijk}=1$, then \eqref{tela} will become
$\mu_{ij}\ge \sum_{m\in H} C_{ijkm} z^d_{ijm}$.
That is, if origin $i$ is assigned to hub $k$, then $\mu_{ij}$ is at least the overall cost
associated to the pair origin-destination $(i,j)$.
Since $((r,s)$-F3) is a minimization problem, $\mu_{ij}$ will be exactly this cost.
If, instead, $z^o_{ij\ell}=1$ for some $\ell\neq k$, \eqref{tela} will become
$$\mu_{ij}\ge \sum_{m\in H} C_{ijkm} z^d_{ijm} - \max_{m\in H} (C_{ijkm}-C_{ij\ell m}).$$
From \eqref{zd1} there will be exactly one $t\in D$ with $z^d_{ijt}=1$ and then the right-hand side of the previous inequality becomes
$$ C_{ijkt} - \max_{m\in H} (C_{ijkm}-C_{ij\ell m})=C_{ijkt} + \min_{m\in H} (-C_{ijkm}+C_{ij\ell m})\le C_{ijkt} -C_{ijkt}+C_{ij\ell t}=C_{ij\ell t},$$
which is a correct lower bound for $\mu_{ij}$ when $z^o_{ij\ell}=z^d_{ijt}=1$.

The strength of this formulation lies in its significant structural simplification, replacing the high-dimensional flow variables with lower-indexed ones. Although the asymptotic order of the total number of variables may be comparable to the four-index model, the practical reduction in the number of critical path-defining variables results in a less dense constraint matrix, leading to superior computational tractability.
	
\section{Solving the \hbox{$(1,p)$-AHLP}}

As motivated in the Introduction, we strategically focus our methodological development on the $(1,p)$-AHLP variant, given its high relevance in logistical scenarios that demand a critical balance between cost control (single origin assignment) and network resilience (multiple destination assignment).

\subsection{Formulations}

Given the allocation structure of this version of the problem, the assignment variables simplify significantly.

\begin{itemize}
\item Origin assignment: The $x$-variables are only required for the origins. Since $r=1$, we use the binary variable $x_{ik}$ to denote whether origin $i$ is assigned to hub $k$ (no need for a superscript $o$),
\begin{equation*}
x_{ik}=\left\{\begin{aligned}
1 & \quad \text{if origin $i$ is assigned to hub $k$} \\
0 & \quad \text{otherwise}
\end{aligned}  \qquad \forall i\in O,k\in H \right.
\end{equation*}
\item Destination flow/assignment: The $z$-variables are simplified as destinations are assigned to any open hub $m \in P$ (no need for a superscript $d$),
and taking into account that $P^o(i)=\{h^o_i\}$ $\forall i\in O$ and $P^d(j)=P$
$\forall j\in D$), then
\begin{equation}\label{eq_z}
z_{ijm}=\left\{\begin{aligned}
1 & \quad \text{for one $m\in P$ such that $(h^o_i,m)\in \arg\min_{t \in P} C_{ijh^o_it}$}\\
0 & \quad \text{otherwise}
\end{aligned}  \qquad \forall i\in O,j\in D,m\in H. \right.
\end{equation}
\item Hub location: We still use $y_k$ as defined previously.
\end{itemize}

The quadratic integer formulation for $(1,p)$-AHLP is the following: 
\begin{subequations}
\begin{align}
&&((1,p)\text{-FQ)\ }\min \quad &
\displaystyle{\sum_{i\in O} \sum_{j\in D}\sum_{k\in H}\sum_{m\in H} C_{ijkm}x_{ik}z_{ijm}}\\
&&\text{s.t. } & \sum_{k\in H} y_k \le p \label{otray} \\
&&& x_{ik}\le y_k && \forall i\in O,k\in H \label{x} \\
&&& \displaystyle{\sum_{k\in H} x_{ik} = 1} && \forall i \in O \label{x1} \\
&&& \displaystyle{\sum_{i\in O} \sum_{j\in D} z_{ijm}\le o\cdot d\cdot y_m} && \forall m\in H \label{zy} \\
&&& \displaystyle{\sum_{m\in H} z_{ijm} = 1}&& \forall i\in O,j\in D \label{z1} \\
&&& y_k\in \{ 0,1\} && \forall k \in H \label{yk01}\\
&&& x_{ik}\in \{ 0,1\} && \forall i\in O,k \in H \label{x01} \\
&&& z_{ijm}\in \{ 0,1\} && \forall i\in O,j\in D,m \in H. \label{zz01}
\end{align}
\end{subequations}

In order to develop a linear formulation suitable for Branch-and-Cut based resolution,
we introduce now a family of continuous variables
\begin{equation*}
\pi_i=\sum_{j\in D} \sum_{k\in H}\sum_{m\in H} C_{ijkm} x_{ik} z_{ijm}.
\end{equation*}
The difficulty
of the formulation is to obtain the values $\pi_i$ from the rest of the variables, in a
linear way. This is done in constraints \eqref{idea}, a family of size $o\cdot h$.
Previously we fix $i\in O$, $j\in D$ and $k\in H$ and sort, in increasing order, $C_{ijkm}$ for all $m\in H$. The
$(h-p+1)$-th entry of this sorted vector is denoted by $M_{ijk}$. Then, the
linear formulation is

\begin{subequations}
\begin{align}
&&((1,p)\text{-F)\ } & \min &&\displaystyle{\sum_{i\in O} \pi_i} \label{fo_pi}\\
&&& \text{s.t.} && \eqref{otray},\eqref{x},\eqref{x1},\eqref{zy},
                       \eqref{z1},\eqref{yk01},\eqref{x01},\eqref{zz01} 	\nonumber \\
&&&&&\pi_i\ge \sum_{j\in D} \bigg(
\sum_{m\in H}  C_{ijkm} z_{ijm} + M_{ijk}(x_{ik}-1)   \bigg)
 && \forall i\in O,k\in H \label{idea} \\
&&&&& \pi_i \ge 0 && \forall i\in O.
\end{align}
\end{subequations}

The meaning of constraints \eqref{idea} is the following. If $x_{ik}=1$, \eqref{idea}
will become $\pi_i\ge \sum_{j\in D} \sum_{m\in H} C_{ijkm} z_{ijm}$.
That is, if origin $i$ is assigned to hub $k$, then $\pi_i$ is at least the overall cost associated to origin $i$.
Since ($(1,p)$-F) is a minimization problem, $\pi_i$ will be exactly this cost.
If, instead, $x_{ik}=0$, \eqref{idea} becomes
$\pi_i\ge \sum_{j\in D}\sum_{m\in H} ( C_{ijkm} z_{ijm} - M_{ijk})$.
Taking into account that there will be $p$ available hubs to assign each destination $j$,
$\sum_{m\in H} C_{ijkm} z_{ijm}$ will be always in the optimum less than or equal to $M_{ijk}$.
Thus, $\pi_i$ will be lowerly bounded by at most 0.

As seen in the next result, formulation ((1,$p$)-F) can provide poor lower bounds.

\begin{propo} \label{cotacero}
The optimal value of the linear relaxation of {\rm ($(1,p)$-F)} can be 0.
\end{propo}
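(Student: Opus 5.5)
The statement is existential ("can be 0"). The plan is therefore to exhibit, for a general class of instances (or at least one concrete family), a feasible point of the linear relaxation of ((1,$p$)-F) with $\pi_i=0$ for all $i\in O$. Since $\pi_i\ge 0$ and the costs are non-negative, this point is optimal with value 0. The natural idea is to spread the fractional assignments uniformly, so that each $x_{ik}$ is small and the term $M_{ijk}(x_{ik}-1)$ nearly cancels the routing term in \eqref{idea}.

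\textbf{Candidate point.} I would take
\begin{equation*}
y_k=\frac{p}{h},\qquad x_{ik}=\frac{1}{h},\qquad \forall i\in O,\ k\in H.
\end{equation*}
For the $z$-variables, for each pair $(i,j)$ I put $z_{ijm}$ on the destination-side hubs that are cheap. Constraints \eqref{otray}, \eqref{x}, \eqref{x1} and \eqref{z1} are immediate, since $x_{ik}=1/h\le p/h$ and $\sum_k y_k=p$. Constraint \eqref{zy} holds trivially because its left-hand side is at most $o\cdot d$ and $y_m>0$; this big-$M$ style constraint is essentially nonbinding in the relaxation. That leaves \eqref{idea}. With $\pi_i=0$ it requires, for every $i$ and $k$,
\begin{equation*}
\sum_{j\in D}\sum_{m\in H} C_{ijkm}z_{ijm}\le \Big(1-\frac1h\Big)\sum_{j\in D}M_{ijk}.
\end{equation*}

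\textbf{Choice of $z$ and the main obstacle.} The crux is choosing $z_{ijm}$, independently of $k$, so that this inequality holds for every $k$ simultaneously. My first attempt is to pick a small concrete instance with disaggregated costs $C_{ijkm}=w_{ij}(\gamma c_{ik}+\alpha c_{km}+\beta c_{mj})$. I would then choose $z_{ijj}=1$ when $j\in H$ is a potential hub, so the cost is $w_{ij}(\gamma c_{ik}+\alpha c_{kj})$. Alternatively, I would construct an instance where, for each $(i,j,k)$, there is a hub $m$ with $C_{ijkm}=0$ common to all $k$, while the $(h-p+1)$-th smallest value $M_{ijk}$ is positive. Then $z_{ijm}=1$ on that hub makes the left side 0, and the inequality holds trivially.

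The simplest concrete choice is $O=D=H$ with $C_{ijkm}=w_{ij}(\gamma c_{ik}+\alpha c_{km}+\beta c_{mj})$, but there zero cost would need $i=k=m=j$. So instead I would build a tailored small instance where $C_{ijkm}=0$ whenever $m=m_j$ for a fixed hub $m_j$ per destination, and positive otherwise. One example is $h=3$, $p=2$, so $M_{ijk}$ is the 2nd smallest value, which is positive. For this instance the exact value at $\pi=0$ is verified directly; since $\pi\ge0$ and the objective is $\sum_i\pi_i$, the relaxation optimum is 0.

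If a more natural, non-degenerate instance is preferred, the fallback is the uniform $z_{ijm}=1/h$. I would then check the inequality numerically on a tiny instance, exploiting the factor $1-1/h$ and $p$ close to $h$ so that $M_{ijk}$ is a large order statistic.
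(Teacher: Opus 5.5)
There is a genuine gap in your feasibility check of \eqref{zy}. You claim it ``holds trivially because its left-hand side is at most $o\cdot d$ and $y_m>0$''. In the relaxation, however, it reads $\sum_{i\in O}\sum_{j\in D} z_{ijm}\le o\cdot d\cdot y_m$. With your $y_m=p/h<1$ (recall $p\le h-1$), it requires the average of the $z_{ijm}$ over all pairs $(i,j)$ to be at most $p/h$. That is exactly what fails for the concentrated $z$ you build. If, e.g., $d=1$, or all destinations share the same zero-cost hub $m^*$, then $\sum_{i,j}z_{ijm^*}=o\cdot d>o\cdot d\cdot p/h$, and your candidate point is infeasible. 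Your concrete instance ($h=3$, $p=2$, one zero-cost hub $m_j$ per destination) leaves the distribution of the $m_j$ unspecified, so as written the argument does not go through. There is a second, related weakness. If the $m_j$ take at most $p$ distinct values, then opening those hubs and sending each $j$ through $m_j$ is an integer solution of cost $0$. The instance would then not exhibit the weak lower bound that the proposition is meant to illustrate.

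One choice fixes both points. Take $h=3$, $p=2$, $d=3$, with $m_1,m_2,m_3$ distinct and $C_{ijkm}>0$ whenever $m\neq m_j$. Then $\sum_{i,j}z_{ijm}=o\le 2o=o\cdot d\cdot p/h$ for every $m$, so $y_m=2/3$, $x_{ik}=1/3$, $z_{ijm_j}=1$, $\pi=0$ is feasible for the relaxation. Meanwhile, any integer solution opens at most two hubs, misses some $m_j$, and has positive cost. (Alternatively, keep $x_{ik}=1/h$ and set $y_k=\max\{1/h,|\{j:m_j=k\}|/d\}$, whose sum is at most $2\le p$; but then the integer optimum may still be $0$.)

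Once repaired, your route differs from the paper's. The paper uses the fully uniform point $y=x=z=1/h$, where \eqref{zy} holds with equality. It observes that $h$ times the right-hand side of \eqref{idea} equals $\sum_{j\in D}\big(\sum_{m\in H}(C_{ijkm}-M_{ijk})+M_{ijk}\big)$, which is negative for suitably spread costs. Your tailored instance is fully explicit, while the paper's argument shows the phenomenon on a broad class of cost vectors without planting zeros.

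Note also that the heuristic in your fallback is backwards. $M_{ijk}$ is the $(h-p+1)$-th smallest entry, so it is a large order statistic when $p$ is small (the second largest for $p=2$). For $p=h-1$ it is the second smallest, and then $\sum_m C_{ijkm}-(h-1)M_{ijk}\ge 0$. So the uniform point cannot give $\pi_i=0$ in that regime except in degenerate cases.
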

{\sc Proof.}
Consider a fractional solution where all $y$-, $x$- and $z$-variables take value $1/h$. Then constraints
\eqref{otray}, \eqref{x}, \eqref{x1} and \eqref{zy} are trivially satisfied.
Fixing values for $i$ and $k$, replacing the fractional solution in the right hand side of the associated
constraint \eqref{idea}, multiplying by $h$ and removing the useless indices it follows
$$\sum_{j\in D} (\sum_{m\in H} C_{jm} + M_j(1-h)).$$
Fixing and removing $j$, each addend is $\sum_{m\in H} (C_m-M) + M$ where
$M$ is the $(h-p+1)$-th largest value of the $h$-dimensional vector $(C_m)$. There are $h-p$ negative and
$p-1$ positive addends. So, there are cases where the sum is negative and the lower bound obtained with this formulation is 0, and other cases where the sum is positive and the lower bound will be greater than zero\vspace{0.2cm}.\hfill $\square$

Note that the use of the more natural but weaker value $M_{ijk}=\max_{m\in H} \{C_{ijkm}\}$ in constraints
\eqref{idea} will always lead to a lower bound equal to 0.

\subsection{Valid inequalities}

We have specifically sought a small formulation for $(1,p)$-AHLP, with the aim of
strengthening it by means of valid inequalities, while keeping the number of constraints
under control. For this reason, we added constraints \eqref{zy} and now introduce
the larger family of stronger constraints
\begin{equation}
z_{ijm}\le y_m \ \ \forall i\in O,\ j\in D,\ m\in H \label{zy2}
\end{equation}
as valid inequalities.
Note that the proof of Proposition \ref{cotacero} remains valid even when
constraints \eqref{zy} are replaced in ($(1,p)$-F) by constraints \eqref{zy2}.

The main family of valid inequalities follows from the following result.
\begin{lemma}
\textbf{Farkas' Lemma.} Let $A$ be an $n\times m$ dimensional matrix.
A vector $b\in \mathbb{R}^n$ verifies $Ax\leq b$ for $x\in \mathbb{R}^n$
with $x\ge \mathbf{0}$ if and only if for any $y\ge \mathbf{0}$ verifying
$A^Ty\ge \mathbf{0}$, it holds $b^Ty\ge 0$.
\end{lemma}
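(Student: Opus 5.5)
The statement should be read with $x\in\mathbb{R}^m$ and with ``verifies $Ax\le b$'' meaning that \emph{some} $x\ge\mathbf{0}$ satisfies $Ax\le b$. I would prove the two implications separately: the direct one by a one-line computation, the converse by contraposition using a separating hyperplane for a suitable convex cone.

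\emph{Necessity.} Suppose $x\ge\mathbf{0}$ satisfies $Ax\le b$, and let $y\ge\mathbf{0}$ satisfy $A^Ty\ge\mathbf{0}$. Since $y\ge\mathbf{0}$ and $b-Ax\ge\mathbf{0}$, we have $y^T(b-Ax)\ge 0$. Since $A^Ty\ge\mathbf{0}$ and $x\ge\mathbf{0}$, we have $y^TAx=(A^Ty)^Tx\ge 0$. Therefore
\[
b^Ty=y^T(b-Ax)+y^TAx\ge 0 .
\]

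\emph{Sufficiency.} I would argue by contraposition: assume no $x\ge\mathbf{0}$ satisfies $Ax\le b$. Consider the set
\[
K=\{Ax+s:\ x\in\mathbb{R}^m,\ s\in\mathbb{R}^n,\ x\ge\mathbf{0},\ s\ge\mathbf{0}\}.
\]
$K$ is the convex cone generated by the columns of $A$ together with the unit vectors $e_1,\dots,e_n$. By assumption $b\notin K$.

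The key step is to separate $b$ from $K$, which requires $K$ to be closed. I expect this to be the main technical point. I would establish it via Carathéodory's theorem for cones. Every point of $K$ is a nonnegative combination of a linearly independent subset of the generators. For each such subset, the cone it generates is the image of the nonnegative orthant under an injective linear map, and is therefore closed. $K$ is a finite union of these closed cones, hence closed.

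With $K$ closed and convex, the strict separating hyperplane theorem gives $y\in\mathbb{R}^n$ and $\alpha\in\mathbb{R}$ with $y^Tb<\alpha\le y^Tv$ for all $v\in K$.
\begin{itemize}
\item Since $\mathbf{0}\in K$, we get $\alpha\le 0$, so $y^Tb<0$.
\item Since $K$ is a cone, $tv\in K$ for all $t>0$, so $y^Tv\ge 0$ for every $v\in K$ (otherwise $y^T(tv)\to-\infty$, contradicting the lower bound $\alpha$).
\item Taking $v=e_i$ gives $y\ge\mathbf{0}$.
\item Taking $v=Ae_j$, the $j$-th column of $A$, gives $A^Ty\ge\mathbf{0}$.
\end{itemize}
Thus $y$ is a vector with $y\ge\mathbf{0}$ and $A^Ty\ge\mathbf{0}$ but $b^Ty<0$, which contradicts the hypothesis. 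This completes the contrapositive.

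\emph{Alternative route.} To avoid the closedness argument, one could instead apply LP strong duality to $\max\{0:\ Ax\le b,\ x\ge\mathbf{0}\}$, whose dual is $\min\{b^Ty:\ A^Ty\ge\mathbf{0},\ y\ge\mathbf{0}\}$. The dual is always feasible ($y=\mathbf{0}$). The hypothesis says its objective is bounded below by $0$, so it has an optimal solution, and strong duality then makes the primal feasible. This route simply moves the difficulty into the proof of strong duality.
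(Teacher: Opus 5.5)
Your proof is correct, and your reading of the statement is the right one. As printed, the lemma has a dimension slip ($x$ must lie in $\mathbb{R}^m$). Also, ``verifies $Ax\le b$ for $x\ge\mathbf{0}$'' has to be read existentially: under the universal reading the equivalence fails, e.g.\ for $A=I$, $b=\mathbf{0}$.

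The paper gives no proof at all. It quotes Farkas' Lemma as a classical tool, so there is no argument of the authors' to compare yours with. What you give is the standard textbook proof, and every step checks out:
\begin{itemize}
\item the easy direction via $b^Ty\ge y^TAx=(A^Ty)^Tx\ge 0$;
\item for the converse, $b\notin K$, with $K$ shown closed via conic Carath\'eodory plus injectivity of the restricted linear maps;
\item strict separation, then reading off $y\ge\mathbf{0}$, $A^Ty\ge\mathbf{0}$ and $b^Ty<0$ from the generators $e_i$ and $Ae_j$.
\end{itemize}
Your remark that the LP-duality route merely relocates the difficulty is also accurate.

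One observation connects this to how the paper uses the lemma. The cuts in the subsequent proposition come from a system with \emph{equality} constraints, namely those tying $X_{ijkm}$ to $x_{ik}$ and $z_{ijm}$. That is why the multipliers $e_{jk}$, $f_{jm}$ are unrestricted. Your cone argument adapts directly: only the inequality row contributes a unit-vector generator, so only its multiplier receives a sign constraint.

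Moreover, \emph{validity} of those cuts needs only your easy direction. If $X\ge\mathbf{0}$ satisfies that system, then $\pi_i\ge\sum_{j,k,m}C_{ijkm}X_{ijkm}\ge\sum_{j,k,m}(e_{jk}+f_{jm})X_{ijkm}=\sum_{j,k}e_{jk}x_{ik}+\sum_{j,m}f_{jm}z_{ijm}$. The hard direction you proved is what one would need for the stronger claim: that these cuts capture the full projection of the four-index linking constraints onto the $(\pi,x,z)$-space.
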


To apply this lemma, we consider three families of valid inequalities that relate $\pi_i$
and the $X$-variables of ($(1,p)$-F4):
\begin{subequations} \label{XXX}
\begin{align}
&&& \displaystyle{\sum_{m\in H} X_{ijkm} = x_{ik}} && \forall i\in O,j\in D,k\in H \\
&&& \displaystyle{\sum_{k\in H} X_{ijkm} = z_{ijm}} && \forall i\in O,j\in D,m\in H \\
&&& \pi_i \ge \sum_{j\in D} \sum_{k\in H}\sum_{m\in H} C_{ijkm} X_{ijkm} && \forall i\in O.
\end{align}
\end{subequations}

Fixing a value of $i\in O$ and using Farkas' Lemma with \eqref{XXX} by means of multipliers
$e_{jk}$ and $f_{jm}$, we directly obtain the following result:
\begin{propo}
Inequalities
\begin{eqnarray} \label{cortes}
\displaystyle \pi_i \ge
\sum_{j\in D} \sum_{k\in H} e_{jk} x_{ik} + \sum_{j\in D} \sum_{m\in H} f_{jm} z_{ijm}
\end{eqnarray}
for each $i\in O$ and $e_{ik}\in \mathbb{R}$, $f_{jm}\in \mathbb{R}$ such that
\begin{equation}\label{eq_ef}
e_{jk} + f_{jm} \le C_{ijkm}, \quad \forall j\in D,\ k,m\in H
\end{equation}
are valid for formulation {\rm ($(1,p)$-F)} .
\end{propo}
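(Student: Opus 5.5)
Fix $i\in O$ and a feasible integer solution $(y,x,z,\pi)$ of ($(1,p)$-F) whose $\pi_i$ equals the true cost of origin $i$. Let $k^*$ be the unique hub with $x_{ik^*}=1$ from \eqref{x1}. For each $j$, let $m_j$ be the unique hub with $z_{ijm_j}=1$ from \eqref{z1}. The idea is to show that for this integer point there is a nonnegative $X$ satisfying the three relations \eqref{XXX}, and then to dualize these relations, which is where Farkas' Lemma enters.

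The first step is the construction of $X$. Define $X_{ijkm}=x_{ik}z_{ijm}$, so that $X_{ijk^*m_j}=1$ and all other entries are zero. Because $\sum_m z_{ijm}=1$ and $\sum_k x_{ik}=1$, the two marginal equalities in \eqref{XXX} hold. The right-hand side of the third relation is then $\sum_j C_{ijk^*m_j}$. Constraint \eqref{idea} for $k=k^*$ forces $\pi_i$ to be at least this value. So \eqref{XXX} is satisfied, with $X\ge 0$.

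The second step is the direct certificate, which avoids invoking Farkas in its ``if and only if'' form. Take any multipliers $e,f$ satisfying \eqref{eq_ef}. Multiply the first relation of \eqref{XXX} by $e_{jk}$ and the second by $f_{jm}$, and add them over all $j,k,m$. This gives
\[
\sum_{j}\sum_k e_{jk}x_{ik}+\sum_j\sum_m f_{jm}z_{ijm}=\sum_j\sum_k\sum_m (e_{jk}+f_{jm})X_{ijkm}.
\]
Since $X\ge 0$ and $e_{jk}+f_{jm}\le C_{ijkm}$, the right-hand side is at most $\sum_{j,k,m}C_{ijkm}X_{ijkm}$, which is at most $\pi_i$. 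This gives \eqref{cortes}. Equivalently, the system \eqref{XXX} in the unknowns $X\ge0$ is feasible. By Farkas' Lemma, with the equalities split into pairs of inequalities and with free multipliers $e,f$ together with a multiplier $1$ on the $\pi_i$-row, every nonnegative combination with $A^Ty\ge 0$ yields $b^Ty\ge0$. The condition $A^Ty\ge0$ on the $X$-columns is exactly \eqref{eq_ef}, and $b^Ty\ge 0$ is exactly \eqref{cortes}.

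The only delicate point is checking that the inequality produced by this dualization is precisely \eqref{cortes}, with the correct sign convention, when the equalities give free multipliers. It is also important to note that validity only needs to hold at integer feasible points of ($(1,p)$-F), so that $\pi_i$ can be taken to be at least the true routing cost. One caveat: the statement writes $e_{ik}$ where $e_{jk}$ is meant. I will read it as $e_{jk}$ for all $j\in D$, $k\in H$, which is immaterial to the argument.
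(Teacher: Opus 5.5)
Your proof is correct and follows the paper's route. The paper simply invokes Farkas' Lemma on system \eqref{XXX} with multipliers $e_{jk},f_{jm}$, and your aggregation of the two marginal equalities against $X_{ijkm}=x_{ik}z_{ijm}\ge 0$ is exactly the easy (weak-duality) direction of Farkas that this appeal relies on. You also make explicit what the paper leaves implicit: the third relation of \eqref{XXX} holds at every integer feasible point, because \eqref{idea} with $k=k^*$ gives $\pi_i\ge\sum_{j}C_{ijk^*m_j}$. For the same reason, your initial restriction to points where $\pi_i$ equals the true cost is unnecessary.
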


The most violated inequalities in \eqref{cortes} can be obtained as follows.
Consider $(\bar{y},\bar{x}, \bar{z}, \bar{\pi})$ a fractional solution to ($(1,p)$-F).
For every $i\in O$
we have to maximize the right-hand side of \eqref{cortes}  subject to condition \eqref{eq_ef}. This leads to the following auxiliary problem for each origin $i \in O$:
\begin{align}
\nonumber && \max \quad &
\displaystyle \sum_{j\in D} \sum_{k\in H} e_{jk} \bar{x}_{ik} +
              \sum_{j\in D} \sum_{m\in H} f_{jm} \bar{z}_{ijm} \\
\nonumber && \text{s.t.} \quad & e_{jk} + f_{jm} \le  C_{ijkm} &
  \forall j\in D,k,m\in H\\
  \nonumber &&& e_{jk}, f_{jm} \text{ unrestricted} &
  \forall j\in D,k,m\in H.
\end{align}
Each of these auxiliary problems can be split into $d$ independent subproblems, one for each destination $j\in D$. So, for fixed $i\in O$ and $j\in D$, we have to solve 
\begin{align}
\nonumber && \max & \sum_{k\in H} e_k \bar{x}_{ik} + 
                          \sum_{m \in H} f_m \bar{z}_{ijm} \\
\nonumber && \text{s.t. } & e_k + f_m \le C_{ijkm} & \forall k,m\in H \\
\nonumber &&& e_k, f_m \text{ unrestricted} & \forall k,m\in H.
\end{align}
This is the dual of a transportation problem and can
be efficiently solved. Its structure can be described as follows: The origins/destinations of the transportation problem are the potential hub locations $k \in H$ and $m \in H$, respectively. 
Supply at origin $k$ is $\bar{x}_{ik}$, representing the fractional allocation of origin $i$ to hub $k$. Demand at destination $m$ is $\bar{z}_{ijm}$, representing the fractional assignment of destination $j$ to hub $m$ associated with origin $i$. Transportation cost from  $k$ to  $m$ is $C_{ijkm}$.

\subsection{Particularization to disaggregated costs}

In this section we give the formulation of the problem for the particular case of disaggregated costs. For all $i\in O$ and $j\in D$, let $C_{ijkm}:=w_{ij}(\gamma c_{ik}+\alpha c_{km}+\beta c_{mj})$, with 
$0\le \alpha\le \beta,\gamma$. Thus, the objective function of our problem is 
\[
\sum_{i \in O} \sum_{j \in D} \sum_{k \in H} \gamma w_{ij} c_{ik} x_{ik} + 
\sum_{i \in O} \sum_{j \in D} \sum_{k \in H} \sum_{m \in H} 
w_{ij} (\alpha c_{km} + \beta c_{mj}) x_{ik} z_{ijm}.
\]

Now, for all $i\in O$ we define variables $\delta_i=\sum_{j\in D} \sum_{k\in H}\sum_{m\in H} w_{ij}(\alpha c_{km}+\beta c_{mj})x_{ik} z_{ijm}$.
The formulation ($(1,p)$-F) with disaggregated costs is

\begin{subequations}
\begin{align}
&&((1,p)\text{-FD)\ } & \min \displaystyle{
\sum_{i\in O} (\delta_i} + \sum_{j\in D} \sum_{k\in H} \gamma w_{ij} c_{ik} x_{ik})\\
&& \text{s.t.} \quad & \eqref{otray},\eqref{x},\eqref{x1},\eqref{zy},
                       \eqref{z1},\eqref{x01},\eqref{zz01} 	\nonumber \\
&&&\delta_i\ge \sum_{j\in D} \bigg(
\sum_{m\in H}w_{ij} (\alpha c_{km}+\beta c_{mj}) z_{ijm} + M'_{ijk}(x_{ik}-1) \bigg)
 && \forall i\in O,k\in H \label{idead} \\
 &&& \delta_i \ge 0 && \forall i\in O
\end{align}
\end{subequations}
where $M'_{ijk}$ is obtained sorting values $w_{ij}(\alpha c_{km}+\beta c_{mj})$ for all
$m\in H$ and taking the $(n-p+1)$-th of them.

The cuts given by (\ref{cortes}) can be adapted to this case:
\begin{eqnarray}\label{si_c2}
\displaystyle \delta_i \ge
\sum_{j\in D} \sum_{k\in H} e^*_{jk} x_{ik} + \sum_{j\in D} \sum_{m\in H } f^*_{jm}z_{ijm}, \quad \forall i \in O,
\end{eqnarray}
where (\mbox{\boldmath$e^*, f^*$}) denote the optimal solutions to the corresponding dual optimization models, i.e., are  obtained by solving,  
for all $i\in O$ and $j\in D$,
\begin{align*}
 (D_{ij})&&\max\ &
{\displaystyle \sum_{k\in H} e_{jk} \bar{x}_{ik} + \sum_{m\in H} f_{jm} \bar{z}_{ijm}} \\
\nonumber && \text{s.t.}\ & e_{jk} + f_{jm} \le w_{ij}(\alpha c_{km}+\beta c_{mj}) &
\forall j\in D,k\in H,m\in H\\
  \nonumber &&& e_{jk}, f_{jm} \text{ unrestricted}  &
\forall j\in D,k\in H,m\in H.
\end{align*}

\section{Computational study}

This section presents a computational study designed to evaluate the performance of the proposed formulations and the effectiveness of the exact solution procedure for the $(1,p)$-AHLP. 

We use the data files of the type SApHLP in the set AP (Australian Post) obtained from 
the OR-library \parencite{ORlib}. 
Computational experiments are performed on an Intel Xeon W-2245 CPU of 3.90 GHz with the Microsoft Windows 10 operating system. All formulations and algorithms are implemented in Xpress v8.10. Solver-specific settings, such as automatic cut generation and presolving, are typically disabled or carefully configured to isolate the impact of our proposed methodology. A CPU time limit of 7200 seconds is imposed for each instance.

\subsection{Performance analysis of the solution method}

We assess the efficiency of formulation ($(1,p)$-FD)
--given the disaggregated nature of the data-- 
and the strengthening impact of the valid inequalities.
We compare the following approaches. 
\begin {enumerate}
\item ($(1,p)$-FD), from now on denoted by FD.
\item FD with Valid Inequalities \eqref{zy2}, via separation.
\item FD with Valid Inequalities \eqref{si_c2}, via separation. The efficiency of generating these cuts is a key focus of our study, and we explore two methods for solving subproblems $(D_{ij})$: 
\begin{itemize}
\item Invoking Xpress also to solve the subproblems during the branch-and-cut process. This method is denoted by FD+\eqref{si_c2}.
\item Using a custom-implemented algorithm, specifically a primal–dual algorithm with preprocessing as described in \textcite{Haddadi2012}. To denote this, a superscript “T” will be added to constraints \eqref{si_c2}.
\end{itemize}
\item FD with both families of valid inequalities,  \eqref{si_c2} (applying
 the second strategy) and \eqref{zy2}. 
\end{enumerate}

The strategy to manage the custom cutting plane procedure, is as follows. The cut generation process is limited to a maximum of 5 iterations, and cut insertion is restricted to 
the root node of the branch-and-bound tree. A minimum cut violation tolerance of 0.01 is required for any inequality to be added.  
Within the same iteration, we apply a sequential strategy. We first focus exclusively on adding constraints of type \eqref{zy2}: In each round, all currently violated inequalities of this family are generated and added, as long as at least 100 violations are found. Once a round produces fewer than 100 such violations, we stop adding type \eqref{zy2} cuts in subsequent rounds and proceed to introduce cuts of type \eqref{si_c2}.

Table \ref{tab:apdata1} shows the numerical results for instances of size $n=40$ and $n=50$. The first column gives the values of $p$. The second column reports the procedure used to solve the problem. 
The following three columns provide the lower bound obtained after adding the cuts, if applicable (\emph{ILB}), the final lower bound (\emph{FLB}) and the final upper bound (\emph{FUP}) when time limit is reached. When an instance is solved within this time, this upper bound gives the optimal value.  
The total time (in seconds) to obtain the optimal solution is reported in column {\em Time}. When this time is reached without obtaining the optimal solution, \emph{TL} is indicated. Column {\em Gap} represents the gap (percent) between the best solution found and the best lower bound.
The last three columns show the number of nodes
explored in the branching tree and the number of constraints added as cuts of types \eqref{si_c2}  and  \eqref{zy2}, respectively. The symbol '-' indicates that it is not applicable.

\begin{table}[htbp]
\setlength\tabcolsep{0.3cm} \centering \scriptsize
    \begin{tabular}{llrrrrrrrr}
    \toprule
    $p$     & Form  & \multicolumn{1}{c}{ILB} & \multicolumn{1}{c}{FLB} & \multicolumn{1}{c}{FUB} & \multicolumn{1}{c}{Time} & \multicolumn{1}{c}{Gap} & \multicolumn{1}{c}{\# Nodes} & \multicolumn{1}{c}{\# \eqref{si_c2}} & \multicolumn{1}{c}{\# \eqref{zy2}} \\
    \midrule
     \multicolumn{10}{c}{$n=40$} \\
    \midrule
    \multirow{5}[2]{*}{2} &FD                                 & 86.82  & 174.78  &174.78  & 1092.4        & 0.00  & 452842 & -     & -    \\
                          &FD+\eqref{zy2}                     & 174.75 & 174.78  &174.78  & 1570.4        & 0.00  & 418032 & -     & 1924 \\
                          &FD+\eqref{si_c2}                   & 170.83 & 174.78  &174.78  & 226.1         & 0.00  & 2189   & 125   & -     \\
                          &FD+\eqref{si_c2}$^{T}$             & 168.91 & 174.78  &174.78  & \textbf{49.4} & 0.00  & 3853   & 89    & -     \\
                          &FD+\eqref{zy2}+\eqref{si_c2}$^{T}$ & 174.56 & 174.78  &174.78  & 143.6         & 0.00  & 90     & 88    & 10965 \\
    \midrule                                                                             
    \multirow{5}[2]{*}{3} &FD                                 &  74.99 & 113.7   &159.55  & TL            & 28.74 & 492222 & -     & -\\
                          &FD+\eqref{zy2}                     & 117.45 & 117.46  &161.72  & TL            & 27.37 & 483970 & -     & 2476\\
                          &FD+\eqref{si_c2}                   & 129.86 & 157.01  &157.01  & 3204.3        & 0.00  & 9097   & 122   & - \\
                          &FD+\eqref{si_c2}$^{T}$             & 150.08 & 157.01  &157.01  & 285.3         & 0.00  & 7990   & 81    & -\\
                          &FD+\eqref{zy2}+\eqref{si_c2}$^{T}$ & 156.76 & 157.01  &157.01  &\textbf{218.3} & 0.00  &847     & 92    & 13736 \\
    \midrule                                                                             
    \multirow{5}[2]{*}{4} &FD                                 & 64.58  & 87.97   &159.12  & TL            & 44.71 & 254030 & -     & -\\
                          &FD+\eqref{zy2}                     & 88.37  & 88.37   &151.81  & TL            & 41.79 & 113456 & -     & 2416 \\
                          &FD+\eqref{si_c2}                   & 124.73 & 145.68  &145.68  & 1589.2        & 0.00  & 28440  & 122   & - \\
                          &FD+\eqref{si_c2}$^{T}$             & 136.55 & 142.27  &142.27  & 254.7         & 0.00  & 18638  & 80    & - \\
                          &FD+\eqref{zy2}+\eqref{si_c2}$^{T}$ & 142.25 & 142.26  &142.27  & \textbf{173.4}& 0.01  &717     & 86    & 11190 \\
    \midrule                                                                             
    \multirow{5}[2]{*}{5} &FD                                 & 57.31  & 81.08   &143.77  & TL            & 43.60 & 171200 & -     & -\\
                          &FD+\eqref{zy2}                     & 74.9   & 74.9    &143.2   & TL            & 47.70 & 177442 & -     & 2748 \\
                          &FD+\eqref{si_c2}                   & 131.35 & 131.58  &131.58  & 3147.1        & 0.00  & 61833  & 105   & - \\
                          &FD+\eqref{si_c2}$^{T}$             & 131.45 & 131.58  &131.58  & 2462.9        & 0.00  & 76935  & 82    & -\\
                          &FD+\eqref{zy2}+\eqref{si_c2}$^{T}$ & 131.44 & 131.58  &131.58  & \textbf{125.2}& 0.00  & 537    & 90    & 10422 \\
    \midrule                                                                             
    \multirow{5}[2]{*}{6} &FD                                 & 51.46  & 74.64   &137.67  & TL            & 45.78 & 302947 & -     & -\\
                          &FD+\eqref{zy2}                     & 75.15  & 75.15   &139.65  & TL            & 46.19 & 173198 & -     & 2485 \\
                          &FD+\eqref{si_c2}                   & 123.06 & 123.53  &123.53  & 3053.2        & 0.00  & 247134 & 61    & - \\
                          &FD+\eqref{si_c2}$^{T}$             & 123.43 & 123.53  &123.53  & 1775.9        & 0.00  & 207688 & 76    & -\\
                          &FD+\eqref{zy2}+\eqref{si_c2}$^{T}$ & 123.38 & 123.53  &123.53  & \textbf{149.2}& 0.00  &1381    & 89    & 8704 \\
    \midrule                                                                             
    \multicolumn{10}{c}{$n=50$} \\                                                         
    \midrule                                                                             
    \multirow{5}[2]{*}{2} &FD                                 & 86.76  & 111.01  &176.3   & TL            & 37.03 & 107480 & -     & -\\
                          &FD+\eqref{zy2}                     & 133.49 & 133.49  &175.81  & TL            & 24.07 & 495194 & -     & 2733 \\
                          &FD+\eqref{si_c2}                   & 114.20 & 114.20  &175.95  & TL            & 35.10 & 0      & 50    & - \\
                          &FD+\eqref{si_c2}$^{T}$             & 173.05 & 175.79  &175.79  & \textbf{195.1}& 0.00  & 2745   &108    & - \\
                          &FD+\eqref{zy2}+\eqref{si_c2}$^{T}$ & 175.77 & 175.77  &175.79  & 1034.5        & 0.01  & 1560   & 96    & 22793 \\
    \midrule                                                                             
    \multirow{5}[2]{*}{3} &FD                                 & 74.77  & 84.52   &164.34  & TL            & 48.57 & 68345  & -     & -\\
                          &FD+\eqref{zy2}                     & 79.6   & 79.6    &171.79  & TL            & 53.66 & 35140  & -     & 3391 \\
                          &FD+\eqref{si_c2}                   & 153    & 156.9   &156.9   & 6340.8        & 0.00  & 12702  & 139   & - \\
                          &FD+\eqref{si_c2}$^{T}$             & 135.84 & 156.9   &156.9   & 773.4         & 0.00  & 15429  & 100   & - \\
                          &FD+\eqref{zy2}+\eqref{si_c2}$^{T}$ & 156.82 & 156.9   &156.9   & \textbf{728.3}& 0.00  & 733    & 105   & 25042 \\
    \midrule                                                                             
    \multirow{5}[2]{*}{4} &FD                                 & 64.23  & 68.74   &154.3   & TL            & 55.45 & 17967 & -     & -\\
                          &FD+\eqref{zy2}                     & 69.76  & 69.76   &148.59  & TL            & 53.05 & 24544 & -     & 3478 \\
                          &FD+\eqref{si_c2}                   & 65.45  & 65.45   &149.22  & TL            & 56.30 & 0     & 50    & - \\
                          &FD+\eqref{si_c2}$^{T}$             & 139.62 & 141.84  &141.84  & 2673.8        & 0.00  & 47855 & 100   & -\\
                          &FD+\eqref{zy2}+\eqref{si_c2}$^{T}$ & 141.65 & 141.83  &141.84  & \textbf{729.6}& 0.01  & 2487  & 99    & 20796 \\
    \midrule                                                                             
    \multirow{5}[2]{*}{5} &FD                                 & 55.66  & 62.64   &137.69  & TL            & 54.51 & 43410 & -     & -\\
                          &FD+\eqref{zy2}                     & 59.86  & 59.86   &147.69  & TL            & 59.47 & 28684 & -     & 3928 \\
                          &FD+\eqref{si_c2}                   & 62.16  & 62.16   &136.37  & TL            & 54.42 & 0     & 50    & - \\
                          &FD+\eqref{si_c2}$^{T}$             & 130.25 & 130.32  &130.32  & 5240          & 0.00  & 147045& 96    & -\\
                          &FD+\eqref{zy2}+\eqref{si_c2}$^{T}$ & 130.29 & 130.32  &130.32  & \textbf{753.7}& 0.00  & 2917  & 110   & 17850 \\
    \midrule                                                                             
    \multirow{5}[2]{*}{6} &FD                                 & 49.86  & 59.08   &141.48  & TL            & 58.24 & 80847 & -     & -\\
                          &FD+\eqref{zy2}                     & 57.9   & 57.9    &131.54  & TL            & 55.98 & 33965 & -     & 3572 \\
                          &FD+\eqref{si_c2}                   & 83.33  & 83.33   &83.33   & TL            & 0.00  & 0     & 86    & - \\
                          &FD+\eqref{si_c2}$^{T}$             & 113.63 & 113.63  &122.45  & TL            & 7.20  & 149589& 96    & -\\
                          &FD+\eqref{zy2}+\eqref{si_c2}$^{T}$ & 122.14 & 122.4   &122.4   & \textbf{801.8}& 0.00  & 4547  & 103   & 16673 \\
       \bottomrule
    \end{tabular}%
  \caption{Computational results for AP data set with $n=40$ and $n=50$ \label{tab:apdata1}}
\end{table}%

Computational results show that the formulation (FD) performs poorly in terms of computational time. This challenge is primarily attributed to the weak lower bound generated by its initial linear programming relaxation, indicating the necessity for strengthening techniques.
The addition of cuts from family \eqref{zy2} significantly improves the initial lower bound, as predicted; however, this bounding strength does not translate into a reduction of the overall solution times, leading to a very high computational cost.
The cuts of family \eqref{si_c2} prove to be highly effective in accelerating the solution process. Notably, our custom-implemented transportation problem algorithm (FD+\eqref{si_c2}$^T$) provides 
competitive solution times compared to the use of the solver  (FD+\eqref{si_c2}). 
The combined impact of incorporating both families of valid inequalities, \eqref{zy2} and \eqref{si_c2}, into the branch-and-cut framework achieves the most competitive solution times and lower bounds.

% Table generated by Excel2LaTeX from sheet 'TABLA'
\begin{table}[htbp]
  \centering
    \begin{tabular}{ccccc} \toprule
   $n$ & $p$       &  \multicolumn{1}{c}{hubs$_{\neq}$} & \multicolumn{1}{c}{Single Alloc.} & \multicolumn{1}{l}{Multiple Alloc.} \\ \toprule
    20&2 & 0     & 1     & 6 \\
    20&3 & 0     & 1     & 5 \\
    20&4 & 0     & 1     & 6 \\
    20&5 & 0     & 1     & 6 \\
    20&6 & 1     & 5     & 7 \\ \midrule
    30&2 & 0     & 0     & 10 \\
    30&3 & 0     & 2     & 8 \\
    30&4 & 0     & 1     & 8 \\
    30&5 & 0     & 1     & 10 \\
    30&6 & 0     & 0     & 9 \\ \midrule
    40&2 & 0     & 1     & 11 \\
    40&3 & 1     & 12    & 12 \\
    40&4 & 1     & 12    & 13 \\
    40&5 & 2     & 18    & 15 \\
    40&6 & 2     & 16    & 15 \\
    \bottomrule
    \end{tabular}%
    \caption{Comparing single and $(1,p)$-allocation\label{tab:Singlevsas}}
\end{table}%

\subsection{Impact of asymmetric $(1,p)$-allocation on the solution}

This subsection presents a comparative analysis of the $(1,p)$-AHLP and the classical Single Allocation Hub Location Problem (SAHLP). For solving the SAHLP, we utilize the formulation given in \textcite{sahlp}. The primary goal of this comparative analysis is structural: we are interested in analyzing how the optimal hub locations and allocation patterns change between the two models to understand the impact of asymmetric allocation on network design. 

To provide a detailed structural interpretation of the solutions, we focus our analysis on a smaller set of problem instances. The results are summarized in Table \ref{tab:Singlevsas}.

The first two columns report the instance size $n$ and the number of hubs $p$, respectively. The \emph{hubs}$_{\neq}$ column records the number of different hub locations selected by the two models, indicating a change in the strategic location of facilities. Since origins maintain single allocation in both models, the \emph{Single Alloc.}\ column tracks the number of origins whose unique assigned hub changes between the two solutions. Finally, the \emph{Multiple Alloc.}\ column records the number of destinations in the $(1,p)$-AHLP solution that are assigned to more than one hub, illustrating the utilization of the model's core flexibility in the distribution phase.

The computational study reveals that the introduction of destination multiple allocation often prompts changes in the optimal network structure. The \emph{hubs}$_{\neq}$ column shows that in several instances, the optimal set of hubs changes (e.g., $n=40$, $p=5$ has 2 different hubs), suggesting the multiple allocation flexibility influences the strategic placement of hubs.

As Table \ref{tab:Singlevsas} shows, the assignment of origins is re-optimized even when the hub locations are unchanged,  meaning reallocation can occur for the same set of hubs.  The \emph{Single Alloc.}\ column demonstrates that even when the set of hubs that are opened is the same when solving both the SAHLP and the $(1,p)$-AHLP models (\emph{hubs}$_{\neq}= 0$), the assignment of origins to these hubs is re-optimized to better utilize the multiple distribution pathways. For example, for the instance with $n = 30$ and $p = 3$, the set of hubs that are opened is the same, but two out of the thirty origins are assigned to different hubs depending on whether the destination follows the single-allocation or the $p$-allocation scheme (\emph{Single Alloc.} = 2).

The \emph{Multiple Alloc.}\ values, showing up to 15 destinations linked to multiple hubs, confirm that having a greater number of hubs enhances the connectivity of the distribution network, which is the main driver of the observed cost savings.

\section{Conclusions}

Our work introduces the novel Asymmetric Hub Location Problem (AHLP), addressing a critical theoretical and practical gap in traditional hub location models that often assume symmetric allocation of origins and destinations to hubs. This new framework allows origins and destinations to connect to hubs under distinct allocation limits, a necessity in real-world scenarios such as humanitarian logistics and global supply chains (e.g., UN relief networks, e-commerce fulfillment, and air cargo operations). We adapt the cost structure to allow for disaggregated costs, making the model highly flexible and applicable to various data structures.

Focusing on the $(1,p)$-AHLP variant (single origin assignment, multiple destination assignment), we develop a three-index formulation and at a later stage we significantly strengthen it using valid inequalities and decomposition techniques, ensuring a highly effective solution methodology.
The success of this methodology is confirmed by the computational study performed on a benchmark dataset commonly used in hub location literature. These studies also confirm that the optimal designs derived from the ${(1,p)}$-AHLP are structurally distinct from those obtained with other classic allocation criteria.

Further research will focus on extending the model's capabilities to address highly complex logistics environments. This includes incorporating capacity constraints on the hubs to manage realistic throughput limits and developing other procedures to efficiently solve larger problem instances. Finally, adapting the framework using stochastic or robust optimization methods will allow the model to operate effectively in environments characterized by uncertain demand or fluctuating costs.

\section*{Acknowledgements}
% Inmaculada Espejo has been partially supported by the following research projects: PCI2024-155092-2 (International Programme DFG-AEI 2023), PID2024-156594NB-C22, and RED2022-134149-T, all funded by the Ministerio de Ciencia, Innovación y Universidades, Spain. 
% Additional support was provided by the Consejería de Universidad, Investigación e Innovación de la Junta de Andalucía through project FEDER-UCA-2024-A2-02 (2021-2027 ERDF Operational Programme).

% Mercedes Landete acknowledges the support of grant PID2021-122344NB-I00, funded by MICIU/AEI/ 10.13039/501100011033 and ERDF/EU (Spain), as well as grant CIPROM/2024/34, funded by the Conselleria de Educación, Cultura, Universidades y Empleo, Generalitat Valenciana.
    
% Marina Leal acknowledges the support of grants PID2020-114594GB-C21 and PID2021-122344NB-I00 funded by MCIN/ AEI/ 10.13039/501100011033, and CIGE/2024/57 funded by Generalitat Valenciana.

% Alfredo Marín acknowledges that this article is part of the research projects PID2022-137818OB-I00 (Ministerio
% de Ciencia e Innovaci\'on, Spain), RED2022-134149-T funded by MCIN/AEI/10.13039/501100011033. Additional support was provided by the Consejería de Universidad, Investigación e Innovación de la Junta de Andalucía through project FEDER-UCA-2024-A2-02 (2021-2027 ERDF Operational Programme).

%\addbibresource{references} %Import the bibliography file

%\bibliography{references.bib}
%\bibliographystyle{authoryear}
%\bibliography{references.bib}
\printbibliography %Prints bibliography

\end{document}